\documentclass{cibb}

\usepackage{subcaption} 
\usepackage{float} 
\usepackage{multicol}
\usepackage{caption} 
\usepackage{dirtytalk}
\usepackage{subcaption}
\usepackage{graphicx}
\usepackage{amsmath,amsfonts,latexsym,amssymb,euscript,xr}
\usepackage{booktabs}
\usepackage[nodayofweek]{datetime}
\usepackage[table]{xcolor}
\usepackage{hyperref}
\usepackage{fmtcount}
\usepackage[english]{datenumber}
\usepackage{csquotes}
\usepackage[absolute]{textpos}
 \usepackage{amsthm} 
 \usepackage{amsmath}
 \usepackage{amssymb} 

\newtheorem{theorem}{Theorem}[section]
\newtheorem{corollary}{Corollary}[theorem]

\newtheorem{proposition}{Proposition}[section]
\theoremstyle{definition}
\newtheorem{definition}{Definition}[section]
\newtheorem{conjecture}{Conjecture}

\newcommand{\R}[0]{\mathbb{R}}

\newcommand{\vol}[2]{\text{vol}_{#1}\left( #2 \right)}
\usepackage{pgfplots}
\pgfplotsset{compat=1.15}
\usepackage{mathrsfs}
\usetikzlibrary{arrows}

\usepackage{color,colortbl,tabularx}

\usepackage[english]{babel}
\usepackage[protrusion=true,expansion=true]{microtype}
\usepackage{amsmath,amsfonts,amsthm}
\usepackage{pifont}

\definecolor{LightBlue}{rgb}{0.88,0.9,0.9}

\title{\Large $\ $\\ \bf Hedgehog Reconstruction of Polygons: \\Non-Central Sections and Slabs}

\author{\large Brendan Matthews\footnote{The author was supported by an NSERC USRA 2025. \\ 2010 \textit{Mathematics Subject Classification}: 52A10, 52A38, 52B15. \\ 
 \textit{Key words and phrases:} Area, convex polygon, hedgehog, sections.}}
\address{Department of Mathematics and Statistics, University of the Fraser Valley,\\
Abbotsford, Canada V2S 7M7 \\
E-mail: BrendanMatthews.academia@gmail.com
}

\abstract{ We show that a polygon can be uniquely determined by the lengths of non-central sections supporting a piecewise-analytic hedgehog in the interior of the polygon. We also prove the analogous result for slab areas - an origin-symmetric polygon can be reconstructed based on the areas of slabs supporting an analytic origin-symmetric hedgehog in the interior of the polygon.}

\begin{document}


\thispagestyle{myheadings}
\pagestyle{myheadings}

\section{Introduction}
\label{sec:Introduction}

Problems concerning the reconstruction of convex bodies from data on their sections or projections arise naturally at the intersection of classical mechanics, harmonic analysis, differential geometry, and convex geometry \cite{AlfonsecaStancuRyaboginYaskin, BusemannPetty1956, Werner2006, Gardner_2006}. In particular, the questions addressed in this work trace their origins to Hammer's X-ray problem \cite{Falconer1983PointSources,Falconer1983StableManifold,GardnerMcMullen1980,Hammer1963,Volcic1986}.
In \cite{BarkerLarman2001},  Barker and Larman posed the following,

\begin{conjecture} \label{conj1}
    If for two convex bodies $K$ and $L$ in $\R^n$, $n \geq 2$, with a convex body $M$ in their interiors and a supporting plane $H_M(\xi)$ of $M$ with unit normal vector $\xi$, the $(n-1)$-dim volumes of the sections coincide,
\[ \text{vol}_{n-1}\left( K \cap H_M(\xi) \right) = \text{vol}_{n-1}(L \cap H_M(\xi)) \quad \text{for all} \quad \xi \in S^{n-1}, 
\]
then $K=L$.
\end{conjecture}

In \cite{Xiong2008Determination}, Xiong, Ma and Cheung confirmed the conjecture for origin-symmetric, strictly convex body $M$ with analytic boundary in the planar case with $K$ and $L$ being polygons. Later, in \cite{Yaskin2011}, Yaskin gave the affirmative answer for convex polytopes $K,L$ and a ball $M$ (so-called $t$-sections). One of the goals of this paper is to generalize this to a larger class of interior bodies $M$ for $n=2$ that are not necessarily convex. In particular, we are interested in envelopes of lines defined by an analytic support function, \textit{analytic hedgehogs}~\cite{MartinezMaure2023}. 


\begin{theorem}\label{Thm1}
    Let $P$ and $Q$ be convex polygons in $\R^2$ with a non-trivial piecewise analytic hedgehog $M$ in their interiors. If for every supporting line $H$ of $M$,
    \[ \vol{1}{P \cap H}= \vol{1}{Q \cap H}, \]
    then $P= Q$.
\end{theorem}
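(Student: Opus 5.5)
We parametrize the supporting lines of $M$ by $\xi=(\cos\theta,\sin\theta)$: $H(\theta)=\{x:\langle x,\xi\rangle=h(\theta)\}$, where $h$ is the piecewise analytic support function of the hedgehog $M$. Set $f_P(\theta)=\vol{1}{P\cap H(\theta)}$ and $f_Q$ likewise. Since $M$ lies in the interiors, every $H(\theta)$ meets both polygons in a nondegenerate chord. The plan is to show that $f_P$ is piecewise analytic in $\theta$, and that its singular angles, together with the local analytic form of $f_P$ near them, determine the vertices of $P$.

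\textbf{Local form.} Away from finitely many exceptional angles, the chord $P\cap H(\theta)$ has endpoints on two fixed edges $e_1,e_2$ of $P$. Write $e_i\subset\{x:\langle x,\eta_i\rangle=c_i\}$. Solving the $2\times 2$ linear systems for the endpoints gives
\[
f_P(\theta)=\left|\frac{c_1-h(\theta)\langle\xi,\eta_1\rangle}{\det(\xi,\eta_1)}-\frac{c_2-h(\theta)\langle\xi,\eta_2\rangle}{\det(\xi,\eta_2)}\right|,
\]
with the sign fixed on each interval, so $f_P$ is analytic there. Each formula is a rational expression in $h,\cos\theta,\sin\theta$ built from the edge data.

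\textbf{Singular angles.} Exceptional angles are of two kinds. The first are angles where $H(\theta)$ passes through a vertex $v$ of $P$, i.e.\ $\langle v,\xi\rangle=h(\theta)$. The second are angles where $h$ itself fails to be analytic. At an angle of the first kind, the formula switches from one edge pair to another. I expect generically a genuine break in analyticity there, typically a jump in the derivative, because the edge $e_1$ is replaced by an adjacent edge of different direction. Taking one-sided derivatives, the jump of $f_P'$ at such $\theta_0$ is proportional to a nonzero quantity depending on the angle between the two adjacent edges. It is also proportional to the transversal speed $h'(\theta_0)-\langle v,\xi'\rangle$ of the line through $v$. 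That speed must be shown nonzero, or else a higher-order discontinuity must be handled.

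\textbf{Comparing $P$ and $Q$.} Suppose $f_P=f_Q$ and $P\neq Q$. On a common analytic interval the two formulas agree. By analyticity of $h$ on pieces, the identity then propagates along the whole analytic piece of $h$, past apparent vertex crossings of only one polygon. This forces the singular sets to match modulo angles where $h$ is singular. From the singular angle sets, we then recover the vertices. For a vertex $v$ of $P$, the set $\{\theta:\langle v,\xi\rangle=h(\theta)\}$ consists of the angles of the tangents from $v$ to $M$. If two distinct points gave tangent lines to $M$ agreeing on an open set of angles, they would coincide. Hence each vertex is recovered as the common point of its tangent lines. Alternatively, equating the rational expressions on an interval where $h$ is non-trivial (not the support function of a point) gives polynomial identities in $\cos\theta,\sin\theta,h$. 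Since $h$ is not of the form $\langle a,\xi\rangle$, these identities force $\eta_i,c_i$ to agree up to swapping. This step is exactly where non-triviality is used.

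\textbf{Main obstacle.} I expect the hardest step to be the degenerate situations: the transversal speed vanishing at a vertex crossing, two vertex crossings of $P$ and $Q$ happening at the same angle, and vertex crossings coinciding with singular angles of the piecewise analytic $h$. My first attempt would be the algebraic route. I would show that on each interval, $f_P=f_Q$ implies that the pair of supporting lines of the edges hit by $H(\theta)$ coincides for $P$ and $Q$, using algebraic independence of $h$ from linear functions of $\xi$. Analytic continuation along $\theta$ then lets me walk around all edges, excluding only finitely many bad angles. This avoids derivative-jump computations entirely and yields $P=Q$ edge by edge.
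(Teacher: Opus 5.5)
\textbf{The gap: local rigidity is false.} Your plan hinges on this claim: on an interval where $h$ is not of the form $\langle a,\xi\rangle$, the identity $f_P=f_Q$ forces the two edge lines met by $H(\theta)$ to be the same for $P$ and $Q$. The claim is false. For a fixed normal, the chord length is affine in the offset $s$ as long as the same two edges are hit. So $f_P=f_Q$ on an interval only says that $h(\theta)$ is the root of a linear equation in $s$, i.e.\ a particular rational function of $\cos\theta,\sin\theta$ built from the edge data, and nothing makes it linear in $\xi$.

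Concretely, let $P$ be bounded near the $x_2$-axis by the lines $x_1=\pm1$, and $Q$ by $x_1=\pm(1+x_2)$. For $\xi=(\sin\phi,\cos\phi)$ near $e_2$, the segments that the line $\langle x,\xi\rangle=s$ cuts out between these two pairs of lines have equal length iff $s\cos\phi=-\sin^2\phi$. Thus $h(\phi)=-\sin^2\phi/\cos\phi$ is an analytic, non-linear support-function piece with $f_P=f_Q$ identically, while all four edge lines are distinct. Non-triviality of $M$ cannot be used interval by interval.

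Your singular-angle route does not close the gap either. A vertex crossing is invisible in $f_P$ exactly when $H(\theta)$ keeps passing through that vertex on a whole interval, i.e.\ when $h$ is locally linear. Also, the tangent angles from a vertex to $M$ form a finite set, so the ``agreement on an open set of angles'' you invoke never occurs.

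\textbf{What the paper does instead.} The missing ingredient is a global argument.
\begin{itemize}
\item The paper picks a transversal crossing $u_1$ of $\partial P$ and $\partial Q$ and takes the supporting line of $M$ through $u_1$.
\item Equal lengths force a second common boundary point $u_2$ on that line.
\item The vector identity $p_1-p_2=q_1-q_2$, explicit in $h$ and the edge directions $\ell_i,m_i$, is continued analytically.
\item Its poles at the normals orthogonal to $\ell_1,\ell_2$ (resp.\ $m_1,m_2$) must then match. These poles are genuine, since $H(\xi)\ni X_\xi\in\operatorname{int}P$ can never be an edge line.
\item Transversality at $u_1$ leaves only $\ell_1\parallel m_2$, $\ell_2\parallel m_1$. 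There the identity collapses to $h(\xi)=\langle (u_1+u_2)/2,\xi\rangle$, contradicting non-triviality.
\end{itemize}

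\textbf{A caveat that also affects your approach.} The paper's contradiction needs the identity on all of $S^1$, i.e.\ analyticity of $h$ along the whole continuation. With $h$ merely piecewise analytic, neither that argument nor any piece-by-piece one can succeed, because the statement then fails. Here is the construction.
\begin{itemize}
\item Take $P$ and $Q=-P$ as in Figure~\ref{fig:counter}.
\item Let $h\equiv0$ on the two closed arcs of normals whose lines through the origin meet a cut-off corner.
\item On the complementary arcs $(\theta_1,\theta_2)$, let $h=\varepsilon e^{-1/(\theta-\theta_1)-1/(\theta_2-\theta)}$. This bump is flat at the endpoints, so it stays below the linearly vanishing offset at which those lines would reach a corner.
\end{itemize}
Lines through the origin give equal chords because $Q=-P$. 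The remaining lines miss both cut corners, so they cut the same chord of the rectangle from $P$ and $Q$. The resulting $M$ is a non-trivial, $C^\infty$, piecewise analytic hedgehog in $\operatorname{int}(P\cap Q)$. So any correct proof must assume $h$ analytic along the continuation path, and ``walking around the edges'' cannot replace that.
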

To formulate the second result, we recall the notion of \textit{slab}, which is the set of points between two parallel hyperplanes. If parallel planes with unit normal $\xi \in S^{n-1}$ are chosen to be tangent to a ball of radius $t$, we denote the slab $S_t(\xi)$. In \cite{RyaboginYaskin2013}, Ryabogin and Yaskin stated the following,
\begin{conjecture} 
    If for two origin-symmetric convex bodies $K$ and $L$ in $\R^n$, $n \geq 2$, with a ball in their interiors,
    \[ \text{vol}_{n}\left( K \cap S_t(\xi) \right) = \text{vol}_n(L \cap S_t(\xi)) \quad \text{for all} \quad \xi \in S^{n-1},\]
then $K=L$. \label{conj2}
\end{conjecture}
Simultaneously, the authors emphasized that the condition of origin-symmetry cannot be eliminated (Figure \ref{fig:counter}). Consider a rectangle centred at the origin. Cut one corner of the rectangle to obtain a polygon $P$. To construct a polygon $Q$, rotate $P$ by $180$ degrees about the origin. Then we obtain two distinct non-origin-symmetric polygons with the same slab areas in any direction. 

In \cite{Yaskin2015}, Yaskin and Yaskina provided the affirmative answer to Conjecture \ref{conj2} in the class of convex polytopes. The next theorem extends this observation in the plane to the class of analytic hedgehogs instead of a ball.
\begin{theorem}\label{Thm2}
 Let $P$ and $Q$ be origin-symmetric convex polygons in $\R^2$ with a non-trivial analytic origin-symmetric hedgehog $M$ in their interiors. 
 If
\[ 
\vol{2}{P \cap S_M(\xi)} = \vol{2}{Q \cap S_M(\xi)} \quad \text{for all} \quad \xi \in S^1,
\]
    then $P =Q$.
\end{theorem}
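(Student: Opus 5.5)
We argue by contradiction through a local singularity analysis of the slab-area function. Write $h$ for the support function of the hedgehog $M$, analytic and even on $S^1$. The slab $S_M(\xi)$ is $\{x : |\langle x,\xi\rangle| \le h(\xi)\}$; replacing $\xi$ by $-\xi$ gives the same slab, so $|h|$ is the relevant half-width. For an origin-symmetric polygon $P$ define the parallel section function
\[
A_P(\xi,t)=\vol{1}{P\cap\{\langle x,\xi\rangle=t\}}.
\]
By origin-symmetry, $A_P(\xi,t)=A_P(\xi,-t)$, hence
\[
\vol{2}{P\cap S_M(\xi)}=2\int_0^{h(\xi)}A_P(\xi,t)\,dt=:F_P(\xi,h(\xi)).
\]
For fixed $\xi$, $t\mapsto A_P(\xi,t)$ is piecewise linear in $t$. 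Its breakpoints occur exactly at $t=\langle v,\xi\rangle$ for vertices $v$ of $P$, and $A_P$ is piecewise analytic jointly in $(\xi,t)$, with singularities only along the curves $t=\langle v,\xi\rangle$.

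The key steps, in order, are as follows.

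First, set $G(\xi)=F_P(\xi,h(\xi))-F_Q(\xi,h(\xi))\equiv 0$. Consider the curve $t=h(\xi)$ together with the finitely many curves $t=\langle v,\xi\rangle$, where $v$ runs over the vertices of $P$ and $Q$. Since $M$ lies in the interiors, $|h(\xi)|<\max_{v\in P}\langle v,\xi\rangle$. Because $M$ is non-trivial, $h$ is not of the form $\langle a,\xi\rangle$, so $h(\xi)-\langle v,\xi\rangle$ is a nonzero analytic function for each $v$. Hence the curve $t=h(\xi)$ meets each vertex curve only at isolated points, and there transversally or with finite order of contact.

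Second, we show the intersection points exist. For a vertex $v$ of $P$, note that $\langle v,\xi\rangle$ takes values $\pm|v|$, so it ranges over $[-|v|,|v|]$ as $\xi$ goes around. Meanwhile $h$ is bounded by the inradius-type bound, so every vertex curve with $|v|$ large enough crosses $t=\pm h(\xi)$. Using symmetry, we may pair $v$ with $-v$, which handles the sign of $h$.

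Third, at a crossing $\xi_0$ where $h(\xi_0)=\langle v,\xi_0\rangle$ for a vertex $v$ of $P$, the function $F_P(\xi,h(\xi))$ has a jump in its second derivative. The jump is proportional to the change of slope of $A_P(\xi,\cdot)$ at $v$, which equals $\cot\alpha_1+\cot\alpha_2$ in terms of the edge angles with $\xi^\perp$. This quantity is nonzero, since the two edges at $v$ have different directions. It is then multiplied by a power of $(h'(\xi_0)-\langle v,\xi_0^\perp\rangle)$, which is nonzero at a finite-order crossing, or is handled by the lowest nonvanishing derivative. Since $G\equiv0$ is smooth, every such singularity of $F_P$ must be cancelled by a singularity of $F_Q$ at the same $\xi_0$. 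Such a cancellation forces a vertex $w$ of $Q$ with $\langle w,\xi_0\rangle=h(\xi_0)$.

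Fourth, we upgrade this to $w=v$ and then to $P=Q$. Analyticity of $h$ lets us track the crossing: the set of $\xi_0$ where $t=h(\xi)$ meets the line $t=\langle v,\xi\rangle$ must coincide, with multiplicity, with crossings of vertex lines of $Q$. Comparing the jump magnitudes, together with the order of contact (which depends on $v$ through $h-\langle v,\cdot\rangle$), gives $w=v$ along with equal angle data. Thus $P$ and $Q$ share the vertices detected by $M$ and the edge directions at them. An induction or symmetry argument then removes the remaining parts: peel off the common vertex contributions and repeat, and conclude that $A_P(\xi,t)=A_P$ agrees with $A_Q$ near $t=h$. By analytic continuation in the region free of singularities, $A_P=A_Q$ identically, which forces $P=Q$.

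The main obstacle is the third and fourth steps. One has to show that a single intersection point of the analytic curve $h$ with the sinusoid $\langle v,\cdot\rangle$ cannot be matched by a different vertex of $Q$. One also has to ensure that the crossings are not all tangential in a way that kills the singular term. If this fails, the first thing to try is higher-order expansions: write $h(\xi)-\langle v,\xi\rangle\sim c(\xi-\xi_0)^k$ and show that the resulting singular term $|\xi-\xi_0|^{2k}$ with a sign-dependent coefficient, coming from a vertex $v$, determines $v$ uniquely.
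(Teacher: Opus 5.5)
There is a genuine gap, and it sits in your third and fourth steps, which carry the whole proof.

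\textbf{What is sound.} Your set-up works. Write $F_P(\xi,s)=2\int_0^sA_P(\xi,t)\,dt$. The singularities of $\xi\mapsto F_P(\xi,h(\xi))$ occur only at crossings $h(\xi_0)=\langle v,\xi_0\rangle$, and every vertex $v$ of $P$ has a sign-changing crossing. The right reason for this is $-h_P<h<h_P$ together with the fact that $\langle v,\cdot\rangle$ attains both $h_P$ and $-h_P$; a bound on $|v|$ is not the point. Your worry about tangential crossings is void: $h'(\xi_0)-\langle v,\xi_0^\perp\rangle=\langle X_{\xi_0}-v,\xi_0^\perp\rangle$, and the support point $X_{\xi_0}\in M\subset\mathrm{int}\,P$ cannot equal the boundary point $v$. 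This is the same fact the paper uses to exclude $\tilde\beta=-\tilde\gamma$.

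\textbf{Step 4 cannot give $w=v$ as proposed.} From $G\equiv 0$ you only learn that some vertex $w$ of $Q$ lies on $H(\xi_0)$ (after using symmetry). The data you propose to compare cannot force $w=v$. The jump of the second derivative is $\pm2c_v(\xi_0)d_v^2$ with $d_v=\langle X_{\xi_0}-v,\xi_0^\perp\rangle$. For a point $w\neq v$ on the same line it is $\pm2c_w(\xi_0)d_w^2$, where $c_w(\xi_0)$ is a cotangent difference fixed by the edges of $Q$ at $w$, and those edges are unconstrained. One scalar equation is easily met with $w\neq v$. Since all crossings are simple, ``order of contact'' adds nothing.

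\textbf{Two further gaps.} First, you do not treat lines $H(\xi_0)$ on which both ends of the chord are vertices, of $P$ and/or $Q$. The two ends are crossed in opposite directions, so their jumps have opposite signs and can cancel; the matching is then between sums, not single terms. Second, your closing step does not follow. The hypothesis is a one-variable identity along $t=h(\xi)$ and says nothing about $A_P(\xi,t)$ off that curve, so ``$A_P=A_Q$ near $t=h$'' and its continuation are unjustified. What you need, and what suffices by convexity, is that every vertex of $P$ is a vertex of $Q$ and vice versa.

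\textbf{The missing idea: compare germs, not numbers.} Let $\ell_\pm$ be the edge directions at $v$. The slope change is $c_v(\xi)=\kappa_v/(\langle\ell_+,\xi\rangle\langle\ell_-,\xi\rangle)$ with $\kappa_v=\pm\det(\ell_+,\ell_-)\neq0$. The difference of the two one-sided analytic continuations of $F_P(\xi,h(\xi))$ at $\xi_0$ is $\pm c_v(\xi)\,(h(\xi)-\langle v,\xi\rangle)^2$. This is (twice) a corner-triangle area of the same form as the paper's $A_\triangle$. In the one-vertex case you get, near $\xi_0$,
\[
\frac{\kappa_v\,(h(\xi)-\langle v,\xi\rangle)^2}{\langle\ell_+,\xi\rangle\langle\ell_-,\xi\rangle}=\frac{\kappa_w\,(h(\xi)-\langle w,\xi\rangle)^2}{\langle m_+,\xi\rangle\langle m_-,\xi\rangle}.
\]
Clearing denominators gives an identity of analytic functions on all of $S^1$. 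The numerators do not vanish at $\ell_\pm^\perp$ or $m_\pm^\perp$: there the edge line supports the polygon, whereas $|h|<h_P$. Hence the edge directions coincide. Then $(h-\langle v,\cdot\rangle)^2\equiv\lambda^2(h-\langle w,\cdot\rangle)^2$ forces $h$ to be linear, i.e.\ $M$ trivial, unless $w=v$. With this, plus a pole/residue analysis of the two-vertex configurations, your vertex-by-vertex route could plausibly be completed, ending directly in equal vertex sets.

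\textbf{How the paper proceeds.} The paper avoids vertices entirely. It starts at a transversal crossing point $u$ of $\partial P$ and $\partial Q$. Using the slabs in directions $\xi$ and $e_2$, it reduces to ``triangle $=$ quadrilateral''. It excludes $v\neq w$ via the linear Taylor coefficient and the support point $X_{e_2}$. In the case $v=w$ it applies the same pole comparison, followed by the central-symmetry argument.
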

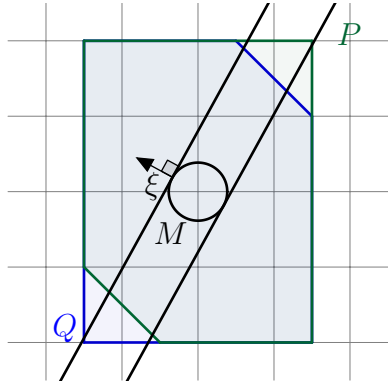
\begin{figure}[h]
    \centering
    \caption{$P \neq Q$, but $\vol{2}{P \cap S_M(\xi)} = \vol{2}{Q \cap S_M(\xi)}$ for all $\xi \in S^1$.}
   \definecolor{qqqqff}{rgb}{0.,0.,1.}
\definecolor{qqwwtt}{rgb}{0.,0.4,0.2}
\definecolor{qqqqcc}{rgb}{0.,0.,0.8}
\begin{tikzpicture}[line cap=round,line join=round,>=triangle 45,x=0.5cm,y=0.5cm]
\clip(-5.,-5.) rectangle (5.,5.);
\draw[step=1cm,gray,very thin] (-5,-5) grid (5,5);
\fill[line width=1.pt,color=qqqqcc,fill=qqqqcc,fill opacity=0.05000000074505806] (1.,4.) -- (3.,2.) -- (3.,-4.) -- (-3.,-4.) -- (-3.,4.) -- cycle;
\fill[line width=1.pt,color=qqwwtt,fill=qqwwtt,fill opacity=0.05000000074505806] (-3.,-2.) -- (-1.,-4.) -- (3.,-4.) -- (3.,4.) -- (-3.,4.) -- cycle;
\draw[line width=.5pt,fill=black,fill opacity=0.10000000149011612] (-0.5087348013620465,0.6748560175577366) -- (-0.8024029209017183,0.8352876424500033) -- (-0.9628345457939849,0.5416195229103316) -- (-0.6691664262543131,0.3811878980180649) -- cycle; 
\draw [line width=1.pt,color=qqqqcc] (1.,4.)-- (3.,2.);
\draw [line width=1.pt,color=qqqqcc] (3.,2.)-- (3.,-4.);
\draw [line width=1.pt,color=qqqqcc] (3.,-4.)-- (-3.,-4.);
\draw [line width=1.pt,color=qqqqcc] (-3.,-4.)-- (-3.,4.);
\draw [line width=1.pt,color=qqqqcc] (-3.,4.)-- (1.,4.);
\draw [line width=1.pt,color=qqwwtt] (-3.,-2.)-- (-1.,-4.);
\draw [line width=1.pt,color=qqwwtt] (-1.,-4.)-- (3.,-4.);
\draw [line width=1.pt,color=qqwwtt] (3.,-4.)-- (3.,4.);
\draw [line width=1.pt,color=qqwwtt] (3.,4.)-- (-3.,4.);
\draw [line width=1.pt,color=qqwwtt] (-3.,4.)-- (-3.,-2.);
\draw[line width=1.pt, smooth,samples=100,domain=0.0:6.283185307179586] plot[parametric] function{0.77*cos((t))-0.0*sin((t)),0.77*sin((t))+0.0*cos((t))};
\draw[line width=1.pt] (-8.397216539766491,6.893259497090827) -- (-5.242265993233279,6.893259497090827);
\draw [line width=1.pt,domain=-5.:5.] plot(\x,{(--0.77-0.8775825618903728*\x)/-0.479425538604203});
\draw [line width=1.pt,domain=-5.:5.] plot(\x,{(-0.77-0.8775825618903728*\x)/-0.479425538604203});
\draw [color=qqwwtt](3.354974246069723,4.76366787818091) node[anchor=north west] {$P$};
\draw [color=qqqqff](-4.122258549213989,-2.950186208092786) node[anchor=north west] {$Q$};
\draw (-1.4568847335521148,-0.5155228036582112) node[anchor=north west] {$M$};
\draw [->,line width=.5pt] (-0.6691664262543131,0.3811878980180649) -- (-1.6456223701854162,0.9146282114103952);
\draw (-1.6929466283834158,0.8199796950143989) node[anchor=north west] {$\xi$};
\end{tikzpicture}
    \label{fig:counter}
\end{figure}


It is natural to wonder whether the analogous result holds if the equality for slab areas in Theorem \ref{Thm2} is replaced by the equality for their perimeters. Unfortunately, the series expansion approach helpful for the proof of Theorem \ref{Thm2} in this case does not seem to yield a clear conclusion, and the answer to such a question of uniqueness eludes us.

The proofs of both theorems are carried out by contradiction. We assume that the tomographic data (lengths of sections or areas of slabs) coincide for a pair of distinct polygons. These assumptions guarantee the existence of a point where the boundaries of the polygons intersect transversely. We then consider a supporting line of the hedgehog passing through this point, and equate the corresponding tomographic data on an open set of nearby supporting lines. The assumption of analyticity allows us to uniquely extend this equality to the entire unit circle of normal directions, except for a finite set of points. This ultimately contradicts the transversality of the intersection and therefore implies the equality of the polygons.
These arguments rely heavily on the polygonal structure of the bodies, since it is essential to vary the supporting lines over an open set while intersecting the same sides of the polygons.

These results also raise further questions concerning stability, extensions beyond the polyhedral class, higher dimensions, and other intrinsic volumes \cite{Gardner_2006}.


\section{Preliminaries}
\label{sec:DATA-AND-METHODS}

For the benefit of a reader, we recall the necessary preliminary notions and results. 
\textbf{\subsection{Support function and envelopes}}

A set $K$ in $\mathbb{R}^n$ is called \textit{convex} if for any two points in $K$ the line segment connecting them is contained in $K$. Set $K$ is called \textit{a convex body} if it is a bounded closed convex set with non-empty interior (Figure \ref{fig:convex}).

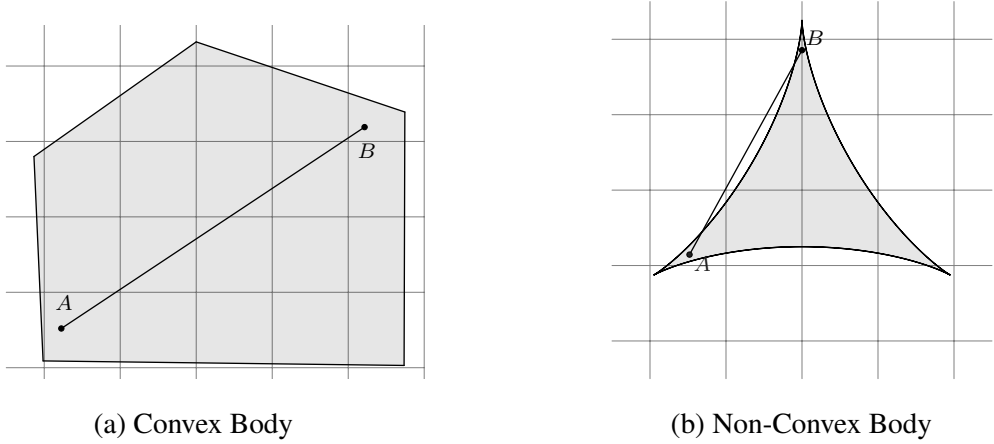
\begin{figure}
\centering
\caption{Convex and Non-Convex Bodies in $\R^2$.}
\begin{subfigure}{.5\textwidth}
 \centering 
\begin{tikzpicture}[line cap=round,line join=round,>=triangle 45,x=0.5cm,y=0.5cm]
\clip(-8.05866666666667,-4.297777777777782) rectangle (8.553777777777785,5.069333333333336);
\draw[step=1cm,gray,very thin] (-5,-6) grid (6,6);
\fill[line width=2.pt,fill=black,fill opacity=0.1] (0.,4.64) -- (-4.27,1.6) -- (-4.03,-3.82) -- (5.47,-3.94) -- (5.49,2.78) -- cycle;
\draw [line width=.5pt] (0.,4.64)-- (-4.27,1.6);
\draw [line width=.5pt] (-4.27,1.6)-- (-4.03,-3.82);
\draw [line width=.5pt] (-4.03,-3.82)-- (5.47,-3.94);
\draw [line width=.5pt] (5.47,-3.94)-- (5.49,2.78);
\draw [line width=.5pt] (5.49,2.78)-- (0.,4.64);
\draw [line width=.5pt] (-3.55,-2.96)-- (4.43,2.38);
\begin{scriptsize}
\draw [fill=black] (-3.55,-2.96) circle (1.pt);
\draw[color=black] (-3.4826666666666677,-2.277333333333336) node {$A$};
\draw [fill=black] (4.43,2.38) circle (1.pt);
\draw[color=black] (4.496,1.7613333333333346) node {$B$};
\end{scriptsize}
\end{tikzpicture}
\subcaption{Convex Body}  
\label{fig:convex}
\end{subfigure}%
\begin{subfigure}{.5\textwidth}
  \centering
         \definecolor{sqsqsq}{rgb}{0.12549019607843137,0.12549019607843137,0.12549019607843137}
\begin{tikzpicture}[line cap=round,line join=round,>=triangle 45,x=0.5cm,y=0.5000000002673796cm]
\draw[step=1cm,gray,very thin] (-5,-5) grid (5,5);
\clip(-5.,-5.) rectangle (5.,5.);

\draw[line width=.5pt, smooth, samples=300, variable=\a, domain=0:360]
  plot ({(3/2)*sin(3*\a)*cos(\a) - (9/2)*cos(3*\a)*sin(\a)},
        {(3/2)*sin(3*\a)*sin(\a) + (9/2)*cos(3*\a)*cos(\a)});

\path[fill=black, opacity=0.1]
  plot[smooth, samples=300, variable=\a, domain=0:360]
    ({(3/2)*sin(3*\a)*cos(\a) - (9/2)*cos(3*\a)*sin(\a)},
     {(3/2)*sin(3*\a)*sin(\a) + (9/2)*cos(3*\a)*cos(\a)})
  -- cycle;

\draw [line width=.5pt] (0.,3.71)-- (-2.96,-1.71);

\begin{scriptsize}
\draw [fill=sqsqsq] (-2.96,-1.71) circle (1.0pt);
\draw[color=sqsqsq] (-2.62,-1.94) node {$A$};
\draw [fill=black] (0.,3.71) circle (1.0pt);
\draw[color=black] (0.34,4.04) node {$B$};
\end{scriptsize}
\end{tikzpicture}

  \subcaption{Non-Convex Body}
  \label{fig:non-convex}
\end{subfigure}

\label{fig:convex-non-convex}
\end{figure}

Among those, \textit{the origin-symmetric} convex bodies are symmetric about the origin, i.e., for a point $v=(v_1,v_2) \in K$, we have $-v=(-v_1,-v_2) \in K$ as well.
For two vectors $u=(u_1, u_2), v=(v_1,v_2)$ in $\R^2$, the standard \textit{inner product} is denoted as $\langle u,v \rangle = u_1v_1 + u_2v_2$. The \textit{unit circle} $S^1$ is the set of unit vectors in $\R^2$.

For a convex body $K$ in $\R^2$, \textit{a supporting line} $H$ of $K$ is a line that intersects the boundary of $K$ but not the interior of $K$. 
\textit{The support function} $h_K$ of $K$ corresponds $\xi \in S^1$ to the signed distance from the origin to the supporting line $H(\xi)$ with the normal vector $\xi$.
Thus,
\begin{equation*}
h_K(\xi) = \sup\left\{ \langle \xi ,y \rangle \ | \ y \in K \right\}, \label{def:supportfuntion}
\end{equation*} 
and
\begin{equation*} 
H(\xi) = \{ x \in \R^2 \ | \  \langle x , \xi \rangle = h_K(\xi) \}. \label{def:supportline}
\end{equation*}
For $\xi \in S^1$, two parallel supporting lines of $K$ with normal vector $\xi$ determine a slab $S_K(\xi)$,
\begin{equation*} 
S_K(\xi) = \left\{ x \in \R^2| -h_K(-\xi) \leq \langle x , \xi \rangle \leq h_K(\xi) \right\}. \label{def:slab}
\end{equation*} 

As Figure \ref{fig:slab-int-polygon} indicates, we are interested in the intersection of a convex body $P$ with a slab defined by $E$.
\definecolor{qqwuqq}{rgb}{0.,0.39215686274509803,0.}
\definecolor{qqwwtt}{rgb}{0.,0.4,0.2}
\definecolor{ududff}{rgb}{0.30196078431372547,0.30196078431372547,1.}
\begin{figure}[H]
    \centering
    \caption{Intersection of a slab $S_E(\xi)$ with $P$.}
    \begin{tikzpicture}[
        scale=0.22,
        line cap=round,
        line join=round,
        >=triangle 45,
        x=1.0cm,
        y=1.0cm
    ]

    \clip(-15,-10) rectangle (15,10);
    \draw[step=3cm,gray,very thin] (-15,-10) grid (15,10);

    \draw[line width=1pt]
    (-3.7932417995731744,7.656946484500107) --
    (-11.168060539280887,-1.4591489020830366) --
    (-1.130112810234278,-8.76568228308975) --
    (10.,2.) --
    (0.,8.) -- cycle;

    \filldraw[
        line width=1pt,
        draw=ududff,
        fill=ududff,
        fill opacity=0.25
    ]
    (-8.814355110159374,1.4502925311366117) --
    (-5.956109809658631,4.983401305366698) --
    (7.172345866614307,-0.7350689544228514) --
    (3.7663169723491663,-4.0295750881760615) -- cycle;

    \filldraw[
        line width=1pt,
        draw=qqwuqq,
        fill=qqwuqq,
        fill opacity=0.25
    ]
    (0,0) ellipse [x radius=3, y radius=2];

    \draw[line width=1pt]
    (1.4692346436474342,1.749082473665502) --
    (1.728214761274243,2.343648649729629) --
    (1.1336485852101155,2.602628767356438) --
    (0.8746684675833066,2.008062591292311) -- cycle;

    \draw[->,line width=1pt]
    (0.8746684675833066,2.008062591292311) --
    (1.9500000000000000,4.4760000000000000);

    \draw[line width=1pt,domain=-15:15]
    plot(\x,{(2.190287720536304 - 0.3993395294062732*\x)/0.9168031087717669});

    \draw[line width=1pt,domain=-15:15]
    plot(\x,{(-2.190287720536304 - 0.3993395294062732*\x)/0.9168031087717669});

    \draw (2.5989210994372103,8.709716371264202)
    node[anchor=north west] {$P$};

    \draw (0,0)
    node {$E$};

    \draw (-4.8,2.3)
    node {$S_E(\xi)$};

    \draw (2.1123453190861666,4.7354114218966)
    node[anchor=north west] {$\xi$};

    \end{tikzpicture}
    \label{fig:slab-int-polygon}
\end{figure}
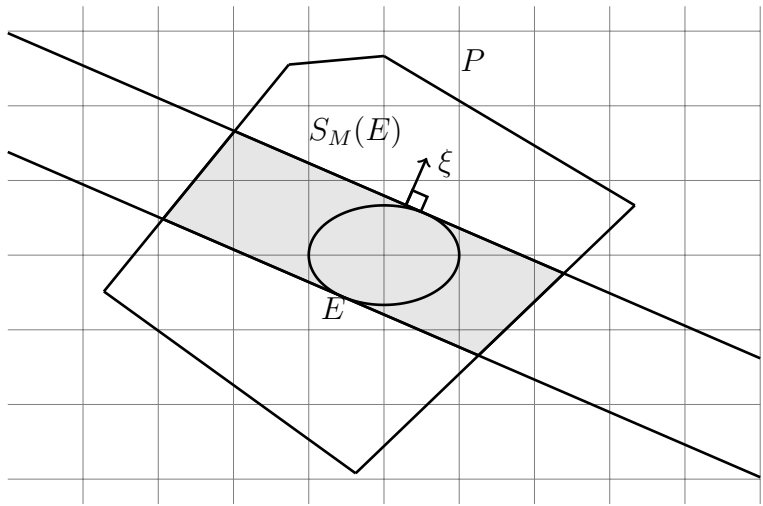

To extend the notion of slab $S_K$ for a convex body $K$ to non-convex structures, we recall the notion of envelope \cite{BruceGiblin1992}. For a family of lines in the plane, the envelope (if it exists) can be thought of as a curve on which each point belongs to at least one of the lines.
\begin{definition}
For $\theta \in \R$ and a differentiable $F$, let a family of lines be defined by $F(\theta,x_1,x_2)=~0$. The \textit{envelope} is the set of points $(x_1,x_2) \in \mathbb{R}^2$ that satisfy
\[ F(\theta,x_1,x_2) = 0, \quad \quad \frac{\partial F}{\partial \theta}(\theta,x_1,x_2)=0 \quad \text{for some} \quad \theta \in \mathbb{R}.\]
\end{definition}
For example, the envelope of the family of supporting lines for a convex body $K$ is $\partial K$, the boundary of $K$.
If the family of lines is defined by a smooth function, each line has a unique common point $X_\xi$ with the envelope, \textit{supporting point}. The following well-known proposition provides a way to trace $X_\xi$.
\begin{proposition}({\cite{MartinezMaure2023}, pp. 15--16})
    Given an envelope $M \subset \R^2$ with a smooth support function $h_M$ and $\xi = (\cos \theta, \sin \theta) \in S^1$, the unique support point $X_\xi$ is given by
\begin{equation}
    \begin{split}
         X_\xi & = 
         h_M(\theta) \left( \cos \theta, \sin \theta \right) + \frac{dh_M(\xi)}{d \theta} \left( -\sin \theta, \cos \theta \right).
    \label{def:supporting_point}
    \end{split}
\end{equation}
\end{proposition}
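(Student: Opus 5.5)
The plan is to derive the formula for $X_\xi$ directly from the definition of the envelope. Parametrize the family of supporting lines of $M$ by $\theta\in\R$ through
\[
F(\theta,x_1,x_2) = x_1\cos\theta + x_2\sin\theta - h_M(\theta),
\]
so that the line with normal $\xi=(\cos\theta,\sin\theta)$ is $\{F(\theta,\cdot)=0\}$. By the definition of the envelope, a point $X=(x_1,x_2)$ of $M$ on this line satisfies both $F=0$ and $\partial F/\partial\theta=0$. Explicitly, these are
\[
\langle X,(\cos\theta,\sin\theta)\rangle = h_M(\theta), \qquad \langle X,(-\sin\theta,\cos\theta)\rangle = \frac{dh_M}{d\theta}(\theta).
\]

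The second step is to solve this $2\times 2$ linear system. The vectors $e(\theta)=(\cos\theta,\sin\theta)$ and $e'(\theta)=(-\sin\theta,\cos\theta)$ form an orthonormal basis of $\R^2$ for every $\theta$. Expanding $X$ in this basis gives
\[
X = \langle X,e\rangle e + \langle X,e'\rangle e'.
\]
Substituting the two equations above yields exactly the formula \eqref{def:supporting_point}. Because the coefficient matrix is a rotation and hence invertible, the solution is unique. This shows that each line meets the envelope, at the parameter value $\theta$, in precisely one point $X_\xi$.

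The only step needing care is interpreting uniqueness, since a non-convex hedgehog line could meet the envelope curve at other points corresponding to different parameters $\theta'$. I would read ``supporting point'' as the point of the envelope associated with the parameter $\theta$ itself, which is the one singled out by the envelope conditions at that $\theta$. With that reading, uniqueness is exactly the invertibility noted above. As a consistency check, I would verify that $\theta\mapsto X_\xi$ traces the envelope. Differentiating gives
\[
X_\xi' = (h_M + h_M'')\, e'(\theta),
\]
which is orthogonal to $e(\theta)$. So the line is tangent to the curve wherever $h_M+h_M''\neq 0$, with cusps of the hedgehog at the zeros of $h_M+h_M''$.
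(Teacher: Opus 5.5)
Your argument is correct. The paper gives no proof of its own, only the citation to \cite{MartinezMaure2023}, and your computation (solving $F=0$, $\partial_\theta F=0$ in the rotating orthonormal frame $\{e(\theta),e'(\theta)\}$) is the standard derivation of that formula.

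Your reading of ``unique'' as uniqueness for the given parameter $\theta$ is the right one, and it is actually needed. The paper's phrase that each line has a unique common point with the envelope fails for non-convex hedgehogs. For $h_M=\sin 4\theta$, the line $x_1=0$ is the support line for both $\theta=0$ and $\theta=\pi$, and it contains the two points $(0,4)$ and $(0,-4)$ of $M$.
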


\textbf{\subsection{Hedgehogs}}
An envelope of lines that does not necessarily result in a convex body can have a peculiar shape that is reflected in its name, Figures \ref{fig:Hedgehog1} and \ref{fig:Hedgehog2}.
\begin{definition}
    For $\xi \in S^1$ and a smooth function $h_M(\xi)$ on $S^1$, a \textit{hedgehog} $M$ is the envelope of the family of lines defined by $\langle (x_1,x_2) ,\xi \rangle = h_M(\xi)$.
\end{definition}

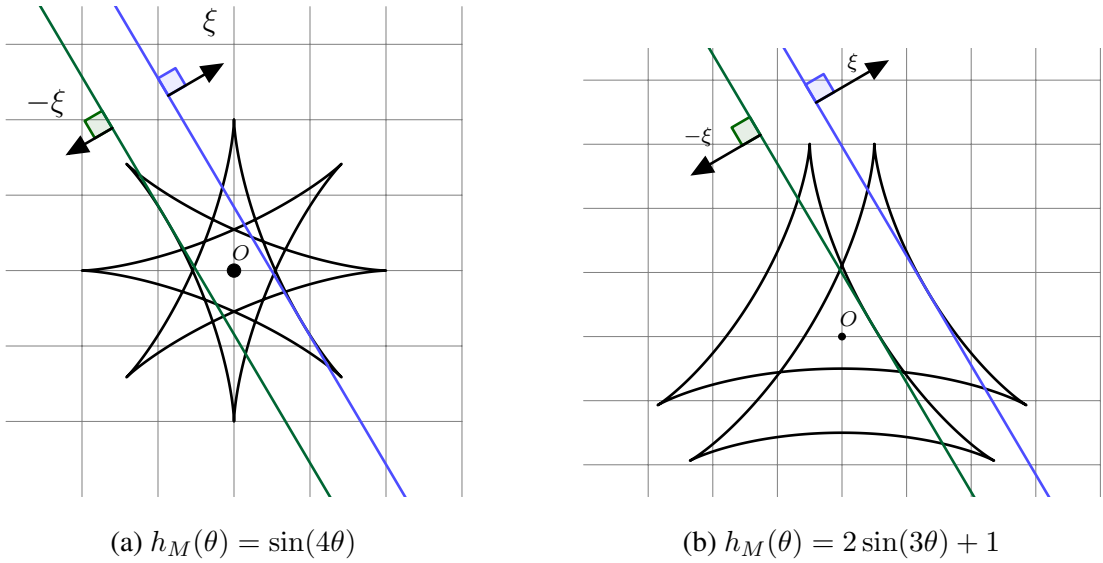
\begin{figure}[H]
\centering
\caption{Analytic hedgehogs.}

\begin{subfigure}{.5\textwidth}
 \centering 
\definecolor{qqwuqq}{rgb}{0.,0.39215686274509803,0.}
\definecolor{qqwwtt}{rgb}{0.,0.4,0.2}
\definecolor{ududff}{rgb}{0.30196078431372547,0.30196078431372547,1.}
\begin{tikzpicture}[scale=1.0, line cap=round, line join=round, >=triangle 45, x=0.5cm, y=0.5cm]
\clip(-6.,-6.) rectangle (6.,7.);
\draw[step=1cm,gray,very thin] (-6,-6) grid (6,7);

\draw[line width=1.pt,color=qqwuqq,fill=qqwuqq,fill opacity=0.10000000149011612]
(-3.4706661189950756,4.236268136554644) -- (-3.928649294903929,3.9679280023057686) --
(-3.6603091606550535,3.5099448263969153) -- (-3.2023259847462002,3.7782849606457907) -- cycle; 

\draw[line width=1.pt,color=ududff,fill=ududff,fill opacity=0.10000000149011612]
(-1.2724966524131733,4.909004797871259) -- (-1.5408367866620487,5.3669879737801125) --
(-1.998819962570902,5.098647839531237) -- (-1.7304798283220268,4.640664663622384) -- cycle; 

\draw[line width=1.pt, smooth, samples=200, variable=\t, domain=0:6.283185307179586]
  plot ({sin(4*\t r)*cos(\t r) - 4*cos(4*\t r)*sin(\t r)},
        {sin(4*\t r)*sin(\t r) + 4*cos(4*\t r)*cos(\t r)});

\draw[line width=4.pt] (-13.22897689714715,10.335297679591154) -- (-8.22448986991725,10.335297679591154);
\draw [line width=1.pt,color=ududff,domain=-6.:6.] plot(\x,{(--0.8529404815528762-0.862807070514761*\x)/0.5055333412048469});
\draw [line width=1.pt,color=qqwwtt,domain=-6.:6.] plot(\x,{(-0.8529404815528759-0.862807070514761*\x)/0.5055333412048469});
\draw (-1.1181182912507872,7.257538369179147) node[anchor=north west] {$\xi$};
\draw [->,line width=1.pt] (-3.2023259847462002,3.7782849606457907) -- (-4.436254054286156,3.0553055123795416);
\draw [->,line width=1.pt] (-1.7304798283220268,4.640664663622384) -- (-0.2586724139586658,5.503021666966443);
\draw (-5.7722912265745965,5.1056090952325395) node[anchor=north west] {$-\xi$};

\begin{scriptsize}
\draw [fill=black] (-12.806837807695011,10.335297679591154) circle (2.5pt);
\draw[color=black] (-12.478303843062665,10.923324864913774) node {$u = 0.53$};
\draw[color=qqwwtt] (-7.59892899151351,11.874177334797158) node {$eq2$};
\draw [fill=black] (0.,0.) circle (2.5pt);
\draw[color=black] (0.18304833582898727,0.46394769619654486) node {$O$};
\end{scriptsize}
\end{tikzpicture}

\subcaption{$h_M(\xi) = \sin(4\theta)$}  
\label{fig:Hedgehog1}
\end{subfigure}%
\begin{subfigure}{.5\textwidth}
  \centering
\definecolor{qqwuqq}{rgb}{0.,0.39215686274509803,0.}
\definecolor{qqwwtt}{rgb}{0.,0.4,0.2}
\definecolor{ududff}{rgb}{0.30196078431372547,0.30196078431372547,1.}
\begin{tikzpicture}[scale=.85, line cap=round, line join=round, >=triangle 45, x=0.5cm, y=0.5cm]
\clip(-8.,-5.) rectangle (8.,9.);
\draw[step=1cm,gray,very thin] (-8,-5) grid (8,9);

\draw[line width=1.pt,color=qqwuqq,fill=qqwuqq,fill opacity=0.10000000149011612]
(-2.854717591082678,6.849601143862958) -- (-3.4142661961090175,6.521752096802283) --
(-3.0864171490483425,5.962203491775943) -- (-2.5268685440220033,6.290052538836618) -- cycle; 

\draw[line width=1.pt,color=ududff,fill=ududff,fill opacity=0.10000000149011612]
(-0.241705797966142,7.628968268306987) -- (-0.5695548450268167,8.188516873333326) --
(-1.1291034500531558,7.8606678262726515) -- (-0.8012544029924813,7.301119221246312) -- cycle; 

\draw[line width=1.pt, smooth, samples=250, variable=\t, domain=0:6.283185307179586]
  plot ({(2*sin(3*\t r)+1)*cos(\t r) - 6*cos(3*\t r)*sin(\t r)},
        {(2*sin(3*\t r)+1)*sin(\t r) + 6*cos(3*\t r)*cos(\t r)});

\draw[line width=1.pt] (-13.818016626373494,11.82801717769109) -- (-7.703700899814571,11.82801717769109);
\draw [line width=1.pt,color=ududff,domain=-8.:8.] plot(\x,{(--2.999631230268582-0.862807070514761*\x)/0.5055333412048469});
\draw [line width=1.pt,color=qqwwtt,domain=-8.:8.] plot(\x,{(--0.9996312302685817-0.862807070514761*\x)/0.5055333412048469});
\draw [->,line width=1.pt] (-2.5268685440220033,6.290052538836618) -- (-4.695101239969814,5.0196481150085495);
\draw [->,line width=1.pt] (-0.8012544029924813,7.301119221246312) -- (1.4583752449927445,8.625074674172751);

\begin{scriptsize}
\draw [fill=black] (-13.30226113347512,11.82801717769109) circle (1.5pt);
\draw[color=black] (-12.900869267389655,12.546449226230619) node {$u = 0.53$};
\draw[color=qqwwtt] (-6.4502661758699915,13.708169134507301) node {$eq2$};
\draw [fill=black] (0.,0.) circle (1.5pt);
\draw[color=black] (0.21433796607923447,0.5623912250606176) node {$O$};
\draw[color=black] (-4.4542514698561195,6.220702388839291) node {$-\xi$};
\draw[color=black] (0.33662428061041294,8.558143241127838) node {$\xi$};
\end{scriptsize}
\end{tikzpicture}

\subcaption{$h_M(\xi) = 2\sin(3\theta) +1$}
\label{fig:Hedgehog2}
\end{subfigure}

\label{fig:hedgehogs}
\end{figure}

In particular, the definition implies that $h_M = h_M(\xi(\theta))$ is $2\pi$-periodic. Moreover, \eqref{def:supporting_point} does not depend on the convexity assumption, and can be used to parameterize $M$ for $\theta \in [0, 2\pi]$.
This way the notion of support line $H(\xi)$ for a hedgehog $M$ extends to $\langle (x_1,x_2) ,\xi \rangle = h_M(\xi)$.  
See \cite{MartinezMaure2023} for more information on hedgehog theory and its fruitful applications.

\textbf{\subsection{Real Analytic Functions}}
\begin{definition}[\cite{KrantzParks2002}, p. 3] \label{def:analytic}
    Let $U$ be an open set in $\mathbb{R}$. Function $f$ is said to be \textit{real analytic} at a point $c \in U$ if $f$ is equal to a convergent power series in some neighbourhood of $c$, 
    \[ f(t) = \sum_{k=0}^\infty a_k(t-c)^k, \quad \text{for all} \quad t \in (c-\varepsilon, c+\varepsilon) \quad \text{for some} \quad \varepsilon > 0.
    \]
\end{definition}

A function $f$ is said to be real analytic on a open set $U$ if $f$ is real analytic at every point of $U$. 
Recall that the basic arithmetic operations on two analytic functions yield an analytic function,
\begin{theorem}[\cite{KrantzParks2002}, pp. 4--6]
Let $f$ be real analytic function on $A$ and $g$ be real analytic function on $B$, then the following holds
     \begin{itemize}
        \item $f \pm g$ is real analytic on $A \cap B$
        \item $f\cdot g$ is real analytic on $A \cap B$
        \item $f/g$ is real analytic on $(A \cap B )/\{ t\in \R| \, g(t)=0\}$
    \end{itemize} \label{thrm:analytic-combination}
\end{theorem}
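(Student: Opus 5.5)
The statement is the closure of real analytic functions under sums, products and quotients (Theorem~\ref{thrm:analytic-combination}). Since it is cited from Krantz--Parks, the plan is a direct power-series argument at each point, working from Definition~\ref{def:analytic}. Throughout, fix a point $c \in A \cap B$ and write, for $|t-c|<\varepsilon$ with $\varepsilon$ the smaller of the two radii,
\[ f(t)=\sum_{k\ge 0} a_k (t-c)^k, \qquad g(t)=\sum_{k\ge 0} b_k (t-c)^k. \]
A convergent power series converges absolutely strictly inside its interval of convergence. After shrinking $\varepsilon$ slightly, both series therefore converge absolutely on $(c-\varepsilon,c+\varepsilon)$.

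\textbf{Sum and product.} For $f\pm g$ I would add the series termwise. This gives $\sum (a_k\pm b_k)(t-c)^k$, which converges on the common interval. For $f\cdot g$ I would form the Cauchy product
\[ \sum_{n\ge 0} c_n (t-c)^n, \qquad c_n=\sum_{k=0}^n a_k b_{n-k}. \]
Mertens' theorem, or more simply absolute convergence of both factors, shows that the Cauchy product converges to $f(t)g(t)$ on the same interval. Since $c$ was an arbitrary point of $A \cap B$, both $f\pm g$ and $fg$ are analytic on $A\cap B$.

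\textbf{Quotient.} It suffices to show that $1/g$ is analytic near every $c$ with $g(c)\neq 0$, and then apply the product case. This is the main obstacle.

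\begin{itemize}
\item First normalize: write $g(t)=b_0\bigl(1-u(t)\bigr)$ with $b_0 = g(c) \neq 0$ and
\[ u(t)=-\sum_{k\ge 1} \frac{b_k}{b_0}(t-c)^k. \]
\item Bound $u$: let
\[ U(s)=\sum_{k\ge 1} \frac{|b_k|}{|b_0|}\, s^k. \]
This is continuous for $s\in[0,\varepsilon)$ with $U(0)=0$, so I can choose $\delta>0$ with $U(\delta)<1$. Then $|u(t)|\le U(|t-c|)<1$ whenever $|t-c|<\delta$.
\item Expand the geometric series:
\[ \frac{1}{g(t)}=\frac{1}{b_0}\sum_{m\ge 0} u(t)^m. \]
Each $u^m$ is a power series by the product case. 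The key step is to rearrange this double series into a single power series in $(t-c)$. Rearrangement is justified because the double series with all coefficients replaced by their absolute values is dominated by $\sum_m U(|t-c|)^m<\infty$. Absolute summability then permits regrouping by powers of $(t-c)$.
\end{itemize}

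This yields a convergent power series for $1/g$ on $(c-\delta,c+\delta)$. Combining it with the product case shows that $f/g$ is analytic on $(A\cap B)\setminus\{t : g(t)=0\}$.
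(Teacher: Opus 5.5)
Your proof is correct. The paper gives no argument of its own for this statement; it quotes it from Krantz--Parks as a standard fact, so there is no paper proof to compare against. Your route (termwise sum, Cauchy product, and a geometric-series expansion of $1/g$ dominated by the majorant $U$) is the standard textbook proof.

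A few points deserve a line each in a write-up:

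\textbf{Shrinking $\varepsilon$.} This step is unnecessary. Convergence of $\sum a_k (t-c)^k$ at every point of $(c-\varepsilon,c+\varepsilon)$ already forces the radius of convergence to be at least $\varepsilon$, so the series converges absolutely on the whole open interval.

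\textbf{The rearrangement bound.} The rearrangement rests on the bound $|c_{m,n}|\le$ (coefficient of $s^n$ in $U(s)^m$), where $c_{m,n}$ is the coefficient of $(t-c)^n$ in $u^m$. This holds because $c_{m,n}$ is a finite sum of products of the numbers $-b_k/b_0$, and the triangle inequality dominates it by the corresponding coefficient of $U^m$.

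\textbf{Finiteness of the new coefficients.} Since $u$ has no constant term, $c_{m,n}=0$ for $m>n$. Hence each coefficient $\sum_m c_{m,n}$ of the resulting series for $1/g$ is a finite sum.

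\textbf{Openness of the domain.} Since $g$ is continuous, $(A\cap B)\setminus\{t : g(t)=0\}$ is open. So the conclusion matches the paper's definition of analyticity on an open set.
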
 

A profound property of unique analytic extension plays a vital role in the following proofs. 
\begin{theorem}[\cite{KrantzParks2002}, p. 13]
    If $f$ and $g$ are real analytic on an open set $U$ and there is an open set $W \subset U $ such that,
    \[ f(t) = g(t), \quad \text{for all} \quad t  \in W,\]
    then
    \[ f(t) = g(t) \quad \text{for all} \quad t  \in U.\] \label{thrm:eqnext}
\end{theorem}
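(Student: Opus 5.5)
The plan is to reduce to a single function and use a connectedness (open–closed) argument. Throughout, $U$ has to be read as \emph{connected} (an open interval, or an open arc of $S^1$ after parametrizing by $\theta$). Without this the statement fails: on $U=(0,1)\cup(2,3)$ the functions $f\equiv 0$ and $g=\mathbf{1}_{(2,3)}$ are real analytic and agree on $W=(0,1)$, but differ on $(2,3)$. In the later applications, $U$ is the circle of normal directions with finitely many points removed. The extension is therefore carried out on each connected arc, or the equality is propagated across the finitely many excluded points by separate means.

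Set $h=f-g$. By Theorem \ref{thrm:analytic-combination}, $h$ is real analytic on $U$, and $h\equiv 0$ on $W$. Define
\[ Z=\{ t\in U \;|\; h^{(k)}(t)=0 \text{ for all } k\geq 0\}. \]
First, I need $h$ to be $C^\infty$ and the coefficients in Definition \ref{def:analytic} to be Taylor coefficients. A convergent power series $\sum a_k(t-c)^k$ may be differentiated termwise inside its interval of convergence, giving again a convergent power series. Hence $h$ is $C^\infty$ near every point, and $a_k=h^{(k)}(c)/k!$. With this in hand, the three properties of $Z$ follow.

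\begin{itemize}
\item \emph{Nonempty.} Since $W$ is open and $h$ vanishes identically on $W$, every derivative of $h$ vanishes at each point of $W$. Thus $W\subset Z$, and $Z\neq\emptyset$ as long as $W$ is nonempty.
\item \emph{Closed in $U$.} $Z=\bigcap_k (h^{(k)})^{-1}(0)$, and each $h^{(k)}$ is continuous on $U$.
\item \emph{Open.} Take $c\in Z$. By analyticity, $h(t)=\sum a_k (t-c)^k$ on some $(c-\varepsilon,c+\varepsilon)$. Every coefficient is $a_k=h^{(k)}(c)/k!=0$, so $h\equiv 0$ on that interval. Hence all derivatives vanish there, and $(c-\varepsilon,c+\varepsilon)\subset Z$.
\end{itemize}

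Since $U$ is connected and $Z$ is a nonempty subset of $U$ that is both open and closed, $Z=U$. In particular $h\equiv 0$ on $U$, that is, $f=g$ on $U$.

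The only step with real content is the openness of $Z$. It rests on the identification $a_k=h^{(k)}(c)/k!$, which comes from termwise differentiation of power series. That fact is standard, and I would quote it from \cite{KrantzParks2002} rather than reprove it.
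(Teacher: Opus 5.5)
Your proof is correct. It is the standard open--closed connectedness argument on the set where all derivatives of $f-g$ vanish. The paper gives no proof of its own, only the citation to Krantz--Parks, and the argument there is essentially this one.

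Keep your caveats. The result needs $U$ connected (in the cited source $U$ is an open interval) and $W\neq\emptyset$. Your example on $(0,1)\cup(2,3)$ shows the statement as printed fails otherwise. This matters here because the paper applies it on $S^1$ with finitely many points removed.
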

We also deal with \textit{piecewise analytic functions}, i.e. functions that are not analytic on a finite set of points but analytic otherwise. 


\section{Proofs of the Theorems}
\label{sec:RESULTS}

In this section, we provide the proofs of Theorem \ref{Thm1} and Theorem \ref{Thm2} in the analytic settings that allow to generalize and simplify some of the approaches taken in \cite{Yaskin2011,Xiong2008Determination, Yaskin2015}.
\textbf{\subsection{Proof of Theorem \ref{Thm1}} }
 \begin{proof}
    Suppose to the contrary that $P$ and $Q$ are two distinct convex polygons in $\R^2$ that contain an analytic hedgehog $M$ in their respective interiors, and for every supporting line $H$ of $M$,  $\vol{1}{P \cap H} = \vol{1}{Q \cap H}$. Hence, neither of the polytopes is contained in the interior of the other, otherwise $\vol{1}{P \cap H} \neq \vol{1}{Q \cap H}$. Thus, there must be a point of intersection of the boundaries of $P$ and $Q$ where they intersect transversally. Let this point of intersection be $u_1 \in \partial P \cap \partial Q$. Then, we consider a parametrization of the corresponding edges of $P$ and $Q$ containing $u_1$,    
    \begin{equation}\label{eqn:edges1}
         x(s_1) = u_1 + \ell_1 s_1, \qquad x(t_1) = u_1 + m_1t_1, \qquad t_1,s_1 \in \mathbb{R}, \quad \ell_1,m_1 \in S^1.
     \end{equation} 
Since the boundaries intersect transversely at $u_1$, we must have $\ell_1$ is not parallel to $m_1$. 

Let $\xi_0 \in  S^1$ such that $H(\xi_0)$ passes through $u_1$ and is a supporting line of $M$. Then, $H(\xi_0)$ must also contain another intersection point of the boundaries of $P$ and $Q$, otherwise $\vol{1}{P \cap H} \neq \vol{1}{Q \cap H} $. Similarly, the parametrization of the other pair of the intersected edges is
\begin{equation}\label{eqn:edges2}
    x(s_2) = u_2+ \ell_2 s_2, \quad x(t_2) = u_2 + m_2t_2, \qquad t_2,s_2 \in \mathbb{R}, \quad \ell_2,m_2 \in S^1,
\end{equation} 
where $x(s_1) = u_1 + \ell_1 s_1$ and $x(s_2) = u_2 + \ell_2 s_2$ are edges of $P$, and $x(t_1) = u_1 + m_1t_1$ and $x(t_2) = u_2 + m_2t_2$ are edges of $Q$. 
Consider a small open interval $\Lambda$ of $S^1$ such that
$$
\Lambda = \left\{  \xi \in S^1 | H(\xi) \text{ intersects edges (\ref{eqn:edges1}) and (\ref{eqn:edges2})}, \, H(\xi) \, \text{contains no vertices of} \, P \, \text{or} \, Q \right\}. 
$$
For any $\xi \in \Lambda$, there are 4 points of intersection with the sides of the polygons, $p_1,\  p_2,\ q_1,\ q_2$. To determine these points, we solve the following systems of equations,
\[\begin{cases}
    \langle x , \xi \rangle = h_M(\xi) \\
    x = u_i + \ell_i s_i 
\end{cases} \qquad \begin{cases}
    \langle x , \xi \rangle = h_M(\xi) \\
    x = u_i + m_i t_i.
\end{cases}  \]    \\
For $i =1, 2$, these yield
$$
    p_i = u_i + \ell_i \left( \frac{h_M(\xi) - \langle u_i ,\xi\rangle }{\langle \ell_i ,\xi \rangle }\right), \quad
    q_i = u_i + m_i \left( \frac{h_M(\xi) - \langle u_i ,\xi\rangle }{\langle m_i ,\xi \rangle }\right).
$$
The condition $\vol{1}{P \cap H} = \vol{1}{Q \cap H}$ implies the equality between vectors $p_1-p_2$ and $q_1-q_2$ for $\xi \in \Lambda$,
\begin{equation}
\begin{split}
   & \ell_1 \left( \dfrac{h_M(\xi)-\langle u_1,\xi \rangle}{ \langle \ell_1 , \xi \rangle} \right) - \ell_2 \left( \dfrac{h_M(\xi)-\langle u_2,\xi \rangle}{ \langle \ell_2 , \xi \rangle} \right)  \\
    & = m_1 \left( \dfrac{h_M(\xi)-\langle u_1,\xi \rangle}{ \langle m_1 , \xi \rangle} \right) -  m_2 \left( \dfrac{h_M(\xi)-\langle u_2,\xi \rangle}{ \langle m_2 , \xi \rangle} \right).
    \label{eqn:sections-equality} \\
\end{split}
\end{equation}
Since $h_M(\xi)$ is piecewise analytic on $S^1$, there exists a finite set $X = \{ x_1 , x_2, \ldots , x_n, \} \subset S^1$, where $h_M$ is not analytic and $h_M$ is analytic on every subinterval of $S^1/X$. 
A vector $ \xi \in S^1$ is parameterized in terms of angle $\theta \in [-\pi, \pi]$, $\xi = (\cos \theta, \sin \theta)$. 
Let both sides of equation \ref{eqn:sections-equality} be,
\begin{align*} 
&f(\xi) = \ell_1 \left( \dfrac{h_M(\xi)-\langle u_1,\xi \rangle}{ \langle \ell_1 , \xi \rangle} \right) - \ell_2 \left( \dfrac{h_M(\xi)-\langle u_2,\xi \rangle}{ \langle \ell_2 , \xi \rangle} \right),\\
 &g(\xi) = m_1 \left( \dfrac{h_M(\xi)-\langle u_1,\xi \rangle}{ \langle m_1 , \xi \rangle} \right) -  m_2 \left( \dfrac{h_M(\xi)-\langle u_2,\xi \rangle}{ \langle m_2 , \xi \rangle} \right).
\end{align*}
By Theorem \ref{thrm:analytic-combination}, $f(\xi)$ is analytic on $S^1 /\left(X \cup \left\{ \ell_1^\perp, \ell_2^\perp \right\}\right )$. Analogously, $g(\xi)$ is analytic on $S^1 /\left(X \cup \left\{ m_1^\perp, m_2^\perp\right\}\right)$. Now consider the $4$ lines that contain the intersected sides of the polytopes, and extend the measurements of the corresponding segments between the lines analytically (and, by Theorem \ref{thrm:eqnext}, uniquely), beyond the sides of polygons $P$ and $Q$. 


Since $f(\xi) = g(\xi)$ for $\xi \in \Lambda$, then the set of directions on $S^1$ where $f(\xi)$ is not analytic coincides with the set of such directions for $ g(\xi)$, $\{X,m_1^\perp, m_2^\perp\}=\{X,\ell_1^\perp, \ell_2^\perp\}$, that gives $\{\ell_1^\perp, \ell_2^\perp\} = \{m_1^\perp, m_2^\perp\}$.

\subsubsection{Case \texorpdfstring{$\ell_1^\perp = m_1^\perp$}{l1 perp = m1 perp} (\texorpdfstring{$\ell_1$}{l1} is parallel to \texorpdfstring{$m_1$}{m1})}
This is a contradiction to the assumption on transversality of the edges assumed in $\eqref{eqn:edges1}$.



\subsubsection{Case \texorpdfstring{$\ell_1^\perp = m_2^\perp$ ($\ell_1$ is parallel to $m_2$)}{l1 perp = m2 perp (l1 is parallel to m2)}}
In this case, $\ell_1 = \pm m_2$ and $\ell_2 = \pm m_1$. Then $(\ref{eqn:sections-equality})$ implies,
\begin{equation*}
    \ell_1 \left( \dfrac{2h_M(\xi)-\langle u_1+u_2,\xi \rangle}{ \langle \ell_1 , \xi \rangle} \right)  = \ell_2 \left( \dfrac{2h_M(\xi)-\langle u_1+u_2,\xi \rangle}{ \langle  \ell_2 , \xi \rangle} \right).
\end{equation*}
This shows that either $\ell_1$ is parallel to $\ell_2$, or $\left\langle \frac{u_1+u_2}{2},\xi \right\rangle = h_M(\xi)$. Since $u_1$ and $u_2$ are fixed and $\xi \in S^1/(X \cup \{\ell_1^\perp,\ell_2^\perp,m_1^\perp,m_2^\perp\})$, almost every supporting line of $M$ must pass through a common point $\frac{u_1+u_2}{2}$. This is impossible for a non-trivial hedgehog $M$.

Since both cases yield a contradiction, there is no point at which $P$ and $Q$ intersect transversely. As they are not subsets of each other, they must coincide, $P= Q$.
\end{proof} 

\textbf{\subsection{Proof of Theorem \ref{Thm2}}}
\begin{proof}
    Suppose to the contrary that $P$ and $Q$ are origin symmetric distinct polygons in $\R^2$ with an analytic hedgehog $M$ fully contained in their interiors, $M \subset \text{int}(P \cap Q)$, yet
    \[ \vol{2}{P \cap S_M(\xi)} = \vol{2}{Q \cap S_M(\xi)}, \quad \forall \xi \in S^1.\]    
    Note that neither of the polytopes can be a subset of the other, otherwise $\vol{2}{P \cap S_M(\xi)} \neq \vol{2}{Q \cap S_M(\xi)}$. Since their interiors intersect (as they both contain $M$), there must exist a point $u \in \partial P \cap \partial Q$. Moreover, $u$ can be chosen so that the sides of the polygons intersect transversally at $u$. Consequently, the parametrizations of the lines containing the corresponding sides of $P$ and $Q$ containing $u$ are,
    \begin{equation}
        x(s_1) = u + s_1\ell_1, \qquad x(t_1) = u + t_1m_1 , \qquad \ell_1 , m_1 \in S^1, \quad s_1,t_1 \in \R.\label{eqn:edges3}
    \end{equation}
Since the intersection is transversal, we have $\ell_1$ is not parallel to $m_1$. 

For the standard basis $\{e_1,e_2\}$ of $\R^2$, consider a coordinate system such that $u$ belongs to the horizontal line $\langle x , e_2 \rangle = h_M(e_2)$ and the half plane $\langle x , e_1 \rangle < 0$. Since the direction vectors of the sides cannot be horizontal, let $\langle \ell_1 , e_2 \rangle > 0$ and $\langle m_1 , e_2 \rangle > 0$.
Let $v, w$ be the other points of intersection of the line $\langle x , e_2 \rangle = h_M(e_2)$ with $P$ and $Q$, respectively. The lines containing the points $v,w$ and the corresponding lines are,
    \begin{equation}
        x(s_2) = v + s_2\ell_2 , \qquad x(t_2) = w + t_2 m_2, \qquad \ell_2,m_2 \in S^1, \quad s_2,t_2 \in \R.\label{eqn:edges4}
    \end{equation} 
Similarly assume $\langle \ell_2,e_2\rangle < 0$ and $\langle m_2,e_2 \rangle < 0$. 

Let $\Omega \subset S^1$ be an open set of unit vectors $\xi$ such that the line $\langle x,\xi \rangle = h_M(\xi)$ intersects the sides given by equations \eqref{eqn:edges3} and \eqref{eqn:edges4} but contains no vertices. Moreover, if $\xi \in \Omega$, $\xi(\phi) = (\cos(\frac{\pi}{2}-\phi),\sin(\frac{\pi}{2}-\phi)) = (\sin\phi,\cos\phi)$. 
Then the line $\langle x,\xi\rangle =h_M(\xi)$ intersects $P$ and $Q$ at 4 points that are determined by the systems of equations,
\[\begin{cases}
    \langle x , \xi \rangle = h_M(\xi) \\
    x = u + \ell_1 s_1 
\end{cases} \quad  \quad \begin{cases}
    \langle x , \xi \rangle = h_M(\xi) \\
    x = u + m_1 t_1
\end{cases}   \quad
\begin{cases}
    \langle x , \xi \rangle = h_M(\xi) \\
    x = v + \ell_2 s_2 
\end{cases} \quad  \quad \begin{cases}
    \langle x , \xi \rangle = h_M(\xi) \\
    x = w + m_2 t_2.
\end{cases} \]  

This yields
\begin{align*}
    p_1 &= u +  \left( \frac{h_M(\xi) - \langle u ,\xi\rangle }{\langle \ell_1 ,\xi \rangle }\right)\ell_1, \qquad
    q_1 = u +  \left( \frac{h_M(\xi) - \langle u ,\xi\rangle }{\langle m_1 ,\xi \rangle }\right)m_1, \\
    p_2 & = v + \left( \frac{h_M(\xi) - \langle v ,\xi\rangle }{\langle \ell_2 ,\xi \rangle }\right)\ell_1, \qquad
    q_2 = w +  \left( \frac{h_M(\xi) - \langle w ,\xi\rangle }{\langle m_2 ,\xi \rangle }\right)m_2.
\end{align*}
Consider the following sets for $P$ and $Q$,
\begin{align*}
    X  = \vol{2}{P \cap S_M(\xi)\cap S_M(e_2)},\qquad  & X'  = \vol{2}{Q \cap S_M(\xi)\cap S_M(e_2)},\\
    Y = \vol{2}{P \cap S_M(\xi) / S_M(e_2)}, \qquad  & Y' = \vol{2}{Q \cap S_M(\xi) / S_M(e_2)}, \\
Z  = \vol{2}{P \cap S_M(e_2) / S_M(\xi)}, \qquad  & Z'  = \vol{2}{Q \cap S_M(e_2) / S_M(\xi)}.  
\end{align*}
By assumption of the theorem, 
$$\vol{2}{P \cap S_M(\xi)} = \vol{2}{Q \cap S_M(\xi)}, \quad \vol{2}{P \cap S_M(e_2}) = \vol{2}{Q \cap S_M(e_2)},$$ 
which implies that $X+Y = X'+Y'$ and $X+Z= X'+Z'$. Hence, $Y-Z = Y'-Z'$ that reads
\begin{equation*}
\begin{split}
        &\vol{2}{P \cap S_M(\xi) / S_M(e_2)} -  \vol{2}{P \cap S_M(e_2) / S_M(\xi)} \\
    &=  \vol{2}{Q \cap S_M(\xi) / S_M(e_2)} -  \vol{2}{Q \cap S_M(e_2) / S_M(\xi)}, \label{eqn:equality1}
\end{split}
\end{equation*}
or equivalently,
\begin{equation}
    \begin{split}
        &\vol{2}{P \cap S_M(\xi) / S_M(e_2)} - \vol{2}{Q \cap S_M(\xi) / S_M(e_2)}\\ 
    &= \vol{2}{P \cap S_M(e_2) / S_M(\xi)} - \vol{2}{Q \cap S_M(e_2) / S_M(\xi)}. \label{eqn:equality-tri-quad}
    \end{split}
\end{equation}
Therefore, for $\xi \in \Omega$, the area of the triangle with vertices of $u$, $p_1$ and $q_1$ is equal to the area of quadrilateral with vertices of $v$, $w$, $p_2$ and $q_2$.

Recall that the area $A$ of a simple polygon (a polygon without self intersections) with vertices $x_1,x_2,x_3,\ldots,x_k$ in counterclockwise order is known from \cite{Braden1986},
\[ A = \frac{1}{2} \left( |x_1,x_2| + |x_2,x_3| + \ldots + |x_{k-1} , x_k| + |x_k,x_1| \right),\]

where $| \cdot , \cdot|$ is the determinant of $2 \times 2$ matrix. 
If $u$, $q_1$ and $p_1$ are in counterclockwise orientation then $v$, $w$, $q_2$, $p_2$ is also in counterclockwise orientation (otherwise, the areas of the triangle and the quadrilateral signs do not coincide).

Denote the corresponding values of the parameters for the four points as $a= \frac{h_M(\xi) - \langle u ,\xi\rangle }{\langle m_1 ,\xi \rangle }$, $b =  \frac{h_M(\xi) - \langle u ,\xi\rangle }{\langle \ell_1 ,\xi \rangle }$, $c =  \frac{h_M(\xi) - \langle v ,\xi\rangle }{\langle \ell_2 ,\xi \rangle } $ and lastly $d =  \frac{h_M(\xi) - \langle w ,\xi\rangle }{\langle m_2 ,\xi \rangle }$. To determine the area $A_\triangle$ of the triangle with vertices $u, q_1,p_1$, we evaluate the following,
\begin{align*}
    |u,q_1| & = \left| u , u +b m_1  \right| = |u,u| + b|u,m_1|  = b|u,m_1|,\\
    |q_1,p_1| & = \left| u + bm_1 , u + a\ell_1  \right| = |u,u| +a|u,\ell_1| + b|m_1,u| + ab|m_1,\ell_1| \\
    & = a|u,\ell_1| - b|u,m_1| + ab|m_1,\ell_1|, \\
    |p_1,u| & = |u + a\ell_1 , u | = |u,u| - a |u,\ell_1| = -a|u,\ell_1 |.
\end{align*}
The sum yields 
    $A_{\triangle} = \frac{1}{2} ab|m_1,\ell_1| = \frac{1}{2}\left( \frac{(h_M(\xi) - \langle u, \xi \rangle)^2}{\langle \ell_1,\xi \rangle\langle m_1,\xi \rangle} \right)|m_1,\ell_1|$. 
For the area $A_\square$ of the quadrilateral with vertices $v,w,q_1,p_1$, the corresponding determinants are
\begin{align*}
    |v,w| & = -|w,v|, \\
    |w,q_2| &= |w , w + dm_2| = |w,w| + d|w,m_2|  = d|w,m_2|,\\ 
|q_2,p_2| & = |w + dm_2, v +c\ell_2| = |w,v| + c|w,\ell_2| - d |v,m_2|  + cd|m_2,\ell_2|, \\
|p_2,v| & = |v + c\ell_2 , v | = |v,v| + c | \ell_2 , v| = -c|v,\ell_2|.
\end{align*}
The sum yields the area $A_\square$,
\begin{equation*}
    \begin{split}
        A_{\square} &  = \frac{1}{2}(c |w-v,\ell_2| + d|w-v,m_2| +cd|m_2,\ell_2| )\\ 
        &= \left(\frac{h_M(\xi) - \langle v ,\xi\rangle }{2\langle \ell_2 ,\xi \rangle } \right)|w-v, \ell_2| +\left( \frac{h_M(\xi) - \langle w ,\xi\rangle }{2\langle m_2 ,\xi \rangle } \right) |w-v,m_2|\\
        & + \frac{(h_M(\xi) - \langle v ,\xi\rangle)(h_M(\xi) - \langle w ,\xi\rangle)}{2\langle \ell_2 , \xi \rangle \langle m_2, \xi \rangle}|m_2,\ell_2|.
    \end{split} \label{eqn:areaquad}
\end{equation*}

By $(\ref{eqn:equality-tri-quad})$, $A_\triangle = A_\square$ that reads 
\begin{equation}
    \begin{split}
         \left( \frac{(h_M(\xi) - \langle u, \xi \rangle)^2}{\langle \ell_1,\xi \rangle\langle m_1,\xi \rangle} \right)|m_1,\ell_1|  = &\left(\frac{h_M(\xi) - \langle v ,\xi\rangle }{\langle \ell_2 ,\xi \rangle } \right)|w-v, \ell_2| \\
         &+\left( \frac{h_M(\xi) - \langle w ,\xi\rangle }{\langle m_2 ,\xi \rangle } \right) |w-v,m_2|\\ &+
    \frac{(h_M(\xi) - \langle v ,\xi\rangle)(h_M(\xi) - \langle w ,\xi\rangle)}{\langle \ell_2 , \xi \rangle \langle m_2, \xi \rangle}|m_2,\ell_2|.
    \end{split} \label{eqn:bigone}
\end{equation}

Let $u = \alpha e_1 + h_M(e_2)e_2$, $v = \beta e_1 + h_M(e_2)e_2$, and $w = \gamma e_1 + h_M(e_2)e_2$, where $\alpha< 0$, $\beta > 0 , \gamma > 0$. Since $h(\phi) = h_M(\xi(\phi))$ is real analytic on $S^1$, then $h_M$ is equal to its power series around $e_2$ ($\phi=0$) on $\Omega$,
\[ 
h_M(\xi) = h_M(\sin\phi,\cos\phi)= h(0) + \frac{d h}{d \phi} (0)\phi + o(\phi^2).
\]
By the chain rule for $\xi \in \Omega$, $x_1 = \sin\phi$, $x_2 = \cos\phi $,
\begin{align*}
    \frac{d h}{d \phi} (0) = \frac{\partial h}{\partial x_1}\frac{\partial x_1}{\partial \phi} + \frac{\partial h}{\partial x_2}\frac{\partial x_2}{\partial \phi} = \left. \frac{\partial h}{\partial x_1} \cos\phi\right|_{\phi=0}  - \left. \frac{\partial h}{\partial x_2} \sin\phi\right|_{\phi=0}
 = \left.  \frac{\partial h}{\partial x_1}\right|_{\phi=0} = h_{x_1}(0).
\end{align*} 
Hence, the Taylor expansion of $h$ about $\phi = 0$ is
\begin{equation}
    h(\phi) = h(0) +h_{x_1}(0)\phi + o(\phi^2). \label{eqn:supportexp}
\end{equation} 
Also consider the Taylor expansion of $\langle x,\xi(\phi) \rangle$ for sufficiently small $\phi$, where $x=\lambda e_1 + h(e_2)e_2 $,
\begin{equation*}
\begin{split}
    \langle x , \xi(\phi) \rangle & = \lambda \sin\phi + h_M(e_2) \cos\phi   = \lambda(\phi + o(\phi^3) ) + h_M(e_2)(1 + o(\phi^2)) \\
    & = h_M(e_2) + \lambda \phi + o(\phi^2).
    \end{split} \label{eqn:innerexp}
\end{equation*}
Then
\begin{equation*}
    h_M(\xi(\phi)) - \langle x ,\xi(\phi)\rangle  = (h_{x_1}(0)-\lambda) \phi + o(\phi^2)= -\tilde{\lambda}\phi + o(\phi^2). \label{eqn:diffexp}
\end{equation*}
Since the above holds for all points $x$ on the horizontal line $\langle x ,e_2\rangle = h_M(e_2)$, for $x = u,v,w$, we have
 \begin{align*}
      h_M(\xi) - \langle u ,\xi\rangle  &= -\tilde{\alpha}\phi + o(\phi^2), \quad   h_M(\xi) - \langle v ,\xi\rangle = -\tilde{\beta}\phi + o(\phi^2),\\
      &h_M(\xi) - \langle w ,\xi\rangle  = -\tilde{\gamma}\phi + o(\phi^2) .
 \end{align*}
Furthermore, for $m_i = (m_{i_1},m_{i_2})$, $\ell_i = (\ell_{i_1},\ell_{i_2})$ the denominators in \eqref{eqn:bigone} can also be expanded around $e_2$, 
\begin{align*}
    \langle \ell_i,\xi \rangle & = \ell_{i_1} \sin\phi + \ell_{i_2}\cos\phi = \ell_{i_2} + \ell_{i_1}\phi + o(\phi^2),\\
    \langle m_i,\xi \rangle & = m_{i_1} \sin\phi + m_{i_2}\cos\phi = m_{i_2} + m_{i_1}\phi + o(\phi^2).
\end{align*}
For the reciprocal of $\langle \ell_i ,\xi \rangle $ and $\langle m_i,\xi \rangle$, we get
\begin{align*}
    \frac{1}{ \langle \ell_i,\xi \rangle} & = \frac{1}{\ell_{i_2} + \ell_{i_1}\phi + o(\phi^2)}
 = \dfrac{1}{\ell_{i_2}}\left( \frac{1}{1 + \epsilon(\phi)} \right)
     = \dfrac{1}{\ell_{i_2}}\left( 1 -\frac{\ell_{i_1}}{\ell_{i_2}} \phi + o(\phi^2)\right),
\end{align*}
where $\epsilon(\phi) = \frac{\ell_{i_1}}{\ell_{i_2}} \phi + o(\phi^2)$.
Thus,
\begin{align*}
     \frac{1}{ \langle \ell_i,\xi \rangle} = \frac{1}{\langle \ell_i ,e_2\rangle} \left( 1 - \frac{\langle \ell_i , e_1 \rangle}{\langle \ell_i,e_2,\rangle}\phi + o(\phi^2) \right), \,
    \frac{1}{ \langle m_i,\xi \rangle}  = \frac{1}{\langle m_i ,e_2\rangle} \left( 1 - \frac{\langle m_i , e_1 \rangle}{\langle m_i,e_2,\rangle} \phi+ o(\phi^2) \right).
\end{align*}

Consequently, for sufficiently small $\phi$, the expansions of the left-hand side (LHS) and the three terms of the right-hand side (RHS) in \eqref{eqn:bigone} are,
\begin{align*}
LHS &= \frac{\tilde{\alpha}^2}{\langle \ell_1,e_2 \rangle\langle m_1,e_2 \rangle} \phi^2 + o(\phi^3), \quad
RHS_{(1)} = \frac{\tilde{\beta}}{\langle\ell_2, e_2 \rangle}|v-w,\ell_2| \phi + o(\phi^2), \\
RHS_{(2)} & = \frac{\tilde{\gamma}}{\langle m_2, e_2 \rangle}|v-w,m_2|\phi+ o(\phi^2), \quad
RHS_{(3)}  = \frac{\tilde{\beta}\tilde{\gamma}}{\langle\ell_2, e_2 \rangle\langle m_2,e_2 \rangle}|m_2,\ell_2| \phi^2+ o(\phi^3).
\end{align*}

Equating the expansions of the two sides and collecting linear terms in $\phi$ yields
\begin{equation}
    \frac{\tilde{\beta}}{\langle\ell_2,e_2 \rangle} | v-w,\ell_2| + \frac{\tilde{\gamma}}{\langle m_2 , e_2 \rangle }|v-w, m_2 | = 0. \label{eqn:lin-terms}
\end{equation}
Note $v - w = (\beta - \gamma ) e_1$, then
$
|v-w, \ell_2 | = (\beta- \gamma) \langle \ell_2 ,e_2\rangle.
$
Analogously for $m_2$, $|v-w , m_2| = (\beta - \gamma) \langle m_2,e_2\rangle$. Then $(\ref{eqn:lin-terms})$ gives
$
(\beta - \gamma) ( \tilde{\beta} + \tilde{\gamma}) = 0.
$

If $\beta = \gamma$, then $v = w$.  Otherwise, $ \tilde{\beta} = - \tilde{\gamma}$, then by the definition of $\tilde{\beta}$ and $\tilde{\gamma}$, this yields
$
h_{x_1}(e_2) = \frac{\beta  +  \gamma}{2} .\label{eqn:contradiction2}
$ Furthermore, note that the midpoint of $v$ and $w$ is then given by $\left(\frac{\beta+\gamma}{2},h_M(e_2) \right) = \left( h_{x_1}(e_2),h(e_2) \right)$.

By \eqref{def:supporting_point}, a support point $X_\xi \in M$ for $\xi = (\sin\phi,\cos\phi)$ is
$X_\xi = h_M(\xi) \xi + \frac{dh_M(\xi)}{d\phi} \xi^\perp$. Note that $\xi^\perp = (\cos\phi,-\sin\phi)$.
If $\xi = e_2$ (that is $\phi =0$), then $\xi ^\perp = e_1$. Using the chain rule, $\frac{d h}{d \phi} (e_2) = h_{x_1} (e_2)$. This yields that
$X_{e_2} = h(e_2) e_2  +h_{x_1}(e_2)e_1 = (h_{x_1}(e_2),h(e_2))$.
Thus, the support point $X_{e_2}$ of $M$ is the midpoint between $v$ and $w$. In particular, that means $X_{e_2} \in M$, $X_{e_2} \not \in P \cap Q$, which is impossible since $M \subset P \cap Q$, so $\tilde{\beta} \neq -\tilde{\gamma}$.

Under the assumption $\beta = \gamma$ (or equivalently $v=w$), $(\ref{eqn:bigone})$ results in
 $
      \frac{(h_M(\xi) - \langle u, \xi \rangle)^2}{\langle \ell_1,\xi \rangle\langle m_1,\xi \rangle} |m_1,\ell_1| =  \frac{(h_M(\xi) - \langle v, \xi \rangle)^2}{\langle \ell_2,\xi \rangle\langle m_2,\xi \rangle} |m_2,\ell_2| .\label{eqn:equality-v-w}
 $
By Theorem \ref{thrm:analytic-combination} both sides are analytic, and by Theorem $\ref{thrm:eqnext}$ their analytic extensions are unique and coincide beyond the intersected sides of the polygons. Then, the points of non-analyticity of the left and right hand side must coincide. Thus, $\{\ell_1^\perp, m_1^\perp\} = \{\ell_2^\perp, m_2^\perp\}$.
Furthermore, it implies that the pair of vectors $\{\ell_1, m_1\}$ must have the same orientation as $\{\ell_2, m_2\}$ (otherwise, the signs of the left side and the right side are distinct).
Then the only possible outcome is $\ell_1 =-\ell_2$ and $m_1 = -m_2$. To conclude the proof, we show that this is impossible.

Since $P$ and $Q$ are symmetric in origin,  $-u \in P \cap Q$ if $u \in P \cap Q$. Then the sides of $P$ and $Q$ containing $-u$ have directions $\ell_1, m_1$, respectively. Hence, $u,v,-u$ belong to the pairwise parallel sides of $P$ and $Q$. As a convex polygon cannot have three distinct parallel sides, then $u$ and $-v$ belong to the same sides of $P$ and $Q$. Therefore, the side of $P$ that is adjacent to $-u$ and parallel to $\ell_1$ should be the \textit{same} as the side of $P$ that is adjacent to $v$ and parallel to $\ell_1$. Thus, $-u$ lies on the straight line that passes through $v$ and is parallel to $\ell_1$. Similarly, $-u$ lies on the straight line that passes through $v$ and is parallel to $m_1$. Since $\ell_1$ is not parallel to $m_1$, this gives $v = -u$. This is not possible and implies $P=Q$.
\end{proof}
Note that the above proof relies on the fact that a supporting line $H(\xi)$ that defines a slab and contains a common point of $P$ and $Q$ corresponds to an analytical point $\xi$ of $h_M$. 
\begin{corollary}
    Theorem \ref{Thm2} still holds if $h_M(\xi)$ is piecewise analytic on $S^1$ and analytic for $\xi$ such that $H(\xi)$ contains a common point of $P$ and $Q$.
\end{corollary}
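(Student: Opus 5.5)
The plan is to rerun the proof of Theorem \ref{Thm2} and find every place where analyticity of $h_M$ is actually used. We will then check that each such use happens only at directions $\xi$, or in neighbourhoods of directions, whose supporting line $H(\xi)$ passes through a common point of $\partial P$ and $\partial Q$. Let $X\subset S^1$ be the finite set where $h_M$ fails to be analytic. The hypothesis of the corollary says that $X$ contains no direction $\xi$ for which $H(\xi)$ meets $\partial P\cap\partial Q$.

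First, the setup. Arguing by contradiction exactly as before, we obtain a transversal intersection point $u\in\partial P\cap\partial Q$, and we rotate coordinates so that $u$ lies on the line $\langle x,e_2\rangle=h_M(e_2)$. By hypothesis, $e_2\notin X$. Since $X$ is finite and $h_M$ is analytic on the open set $S^1\setminus X$, we can shrink $\Omega$ to a small arc around $e_2$ that avoids $X$. On this arc the expansion \eqref{eqn:supportexp} is valid. Moreover, the formula \eqref{def:supporting_point} for $X_{e_2}$ only needs differentiability of $h_M$ at $e_2$, which we have.

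Second, the local computation. The identity \eqref{eqn:bigone} for $\xi\in\Omega$, the linear-term comparison giving $(\beta-\gamma)(\tilde\beta+\tilde\gamma)=0$, and the midpoint contradiction excluding $\tilde\beta=-\tilde\gamma$ are all purely local at $e_2$. They therefore go through verbatim.

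Third, the case $v=w$. This is the only genuinely global step, and I expect it to be the main obstacle. There we extended both sides of the equality of the two rational expressions analytically and compared their singular sets. Now $h_M$ is only analytic on the arcs of $S^1\setminus X$. Let $I$ be the component arc of $S^1\setminus X$ containing $\Omega$. Define $F(\xi)=(h_M(\xi)-\langle u,\xi\rangle)^2|m_1,\ell_1|$ and $G(\xi)=(h_M(\xi)-\langle v,\xi\rangle)^2|m_2,\ell_2|$. By Theorem \ref{thrm:eqnext}, the identity
\[
F(\xi)\,\langle \ell_2,\xi\rangle\langle m_2,\xi\rangle=G(\xi)\,\langle \ell_1,\xi\rangle\langle m_1,\xi\rangle
\]
extends from $\Omega$ to all of $I$, since both sides are analytic on $I$ with no denominators. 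Both sides may also be extended to neighbourhoods of the poles on the remaining arcs, but this is not needed.

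The task is then to recover $\{\ell_1^\perp,m_1^\perp\}=\{\ell_2^\perp,m_2^\perp\}$, which requires the zeros of the linear factors to lie in $I$. I would first try to use the cross-multiplied identity directly. Near $\ell_1^\perp$, the right side vanishes to odd order in $\langle\ell_1,\xi\rangle$ unless this zero is matched by a linear factor on the left or by a zero of $F$. The zeros of $F$ occur exactly where $H(\xi)$ passes through $u$, and these are isolated for a non-trivial hedgehog. This comparison works on any arc where analyticity holds.

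To make sure $I$ is large enough, I would use the freedom in choosing $u$. The directions $\ell_1^\perp$ and $m_1^\perp$ correspond to supporting lines of $M$ parallel to the edges through $u$, so the relevant arc lies between $e_2$ and those directions. If some point of $X$ intervenes, I would replace $u$ by another transversal common point, or apply the argument at the symmetric point $-u$, where the directions are the same by origin symmetry. Failing that, I would fall back on the observation stated just before the corollary: the proof only evaluates $h_M$ at directions whose lines pass through common boundary points. This suggests that the singular-set comparison can be carried out entirely within the analytic arcs guaranteed by the hypothesis.

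Once $\ell_1=-\ell_2$ and $m_1=-m_2$ are obtained, the concluding symmetry argument with the three parallel sides is purely geometric and is unchanged. This gives $P=Q$.
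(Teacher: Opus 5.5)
Your audit of the local steps is right, and it matches the paper's entire justification for this corollary, which is the one-sentence remark just before it. The expansion at $e_2$, the relation $(\beta-\gamma)(\tilde\beta+\tilde\gamma)=0$ and the midpoint contradiction only use $h_M$ near $e_2$. Your reading of the hypothesis via $\partial P\cap\partial Q$ is the only substantive one: read literally, every $H(\xi)$ contains $X_\xi\in M\subset \mathrm{int}(P\cap Q)$, so the hypothesis would simply force $X=\emptyset$.

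The gap is the case $v=w$, which you isolate but do not close. The zero comparison needs the cross-multiplied identity at a zero of $\langle\ell_1,\cdot\rangle$ and at a zero of $\langle m_1,\cdot\rangle$, so $I$ must reach $\pm\ell_1^\perp$ and $\pm m_1^\perp$. Nothing in the hypothesis ensures this, and there is a concrete obstruction:
\begin{itemize}
\item Along either arc from $e_2$ towards $\pm\ell_1^\perp$, the point $H(\xi)\cap(u+\R\ell_1)$ starts at $u$ and escapes to infinity. The numerator $h_M(\xi)-\langle u,\xi\rangle$ stays nonzero at $\pm\ell_1^\perp$, because $H(\pm\ell_1^\perp)$ is parallel to the edge line and meets $\mathrm{int}\,P$.
\item Hence $H(\xi)$ must first pass through a vertex of $P$. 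Beyond that direction the slab identity no longer reduces to $A_\triangle=A_\square$ for these four edges.
\item So a merely piecewise analytic $h_M$ may leave the continued branch there. The hypothesis says nothing about analyticity at that direction unless the vertex happens to lie on $\partial Q$.
\end{itemize}
Nor can an argument local at $e_2$ replace the continuation. When $v=w$, the identity near $e_2$ is solved by an analytic germ: a weighted average of $\langle u,\xi\rangle$ and $\langle v,\xi\rangle$ with positive weights. Its support point $X_{e_2}$ lies on the open segment between $u$ and $v$, inside $P\cap Q$, so nothing is contradicted locally.

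Your three fallbacks do not supply the missing input.
\begin{itemize}
\item Passing to $-u$ changes nothing. Origin symmetry means $h_M(-\xi)=h_M(\xi)$, so $X=-X$ and the analytic arc around $-e_2$ is $-I$. Since $\{\pm\ell_1^\perp\}$ is symmetric, $-I$ reaches a zero exactly when $I$ does.
\item Replacing $u$ by another transversal common point meets the same kind of obstruction: the continuation again has to cross a direction whose supporting line passes through a vertex. You give no reason why some choice of $u$ avoids this.
\item Falling back on the remark before the corollary is precisely what the paper does, but that remark only covers the local steps. The directions $\pm\ell_1^\perp,\pm m_1^\perp$, and the arcs leading to them, are used in the case $v=w$. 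In general their supporting lines do not meet $\partial P\cap\partial Q$.
\end{itemize}
As written, your argument, like the paper's remark, proves the corollary only under an extra assumption. One such assumption is that $h_M$ is analytic on arcs joining $e_2$ to zeros of $\langle\ell_1,\cdot\rangle$ and $\langle m_1,\cdot\rangle$. Without something of this kind, a genuinely new global argument is needed for $v=w$.

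A minor point on your parity count: a zero of $F$ has even order and cannot absorb the odd-order zero on the right. Parity alone therefore forces $\ell_1\parallel\ell_2$ or $\ell_1\parallel m_2$, and whether the zeros of $F$ are isolated is irrelevant.
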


\section*{Acknowledgements}

I would like to express gratitude to my supervisor, Dr.~Serhii Myroshnychenko, for the guidance and support provided throughout this research project. I also thank the Department of Mathematics and Statistics at UFV for the many opportunities and encouragement offered during my studies. In addition, I would like to acknowledge NSERC for the Undergraduate Student Research Award, which provided financial support and allowed me to pursue research in a subject I am deeply passionate about. Finally, I thank anonymous reviewers for their valuable comments and suggestions, which helped improve this work.

\footnotesize
\bibliographystyle{plain}
\bibliography{Paper_Biblography}
\normalsize

\end{document}